\def \N {{\mathbb N}}
\def \P {{\mathbb P}}
\def \Z {{\mathbb Z}}
\def\e{{\rm e}}
\def\i{{\rm i}}
\def\stacksum#1#2{{{\scriptstyle #1}}\atop {{\scriptstyle #2}}}
\def\le{\leqslant}
\def\leq{\leqslant}
\def\ge{\geqslant}
\def\geq{\geqslant}
\theoremstyle{plain}
\newtheorem{theorem}{Theorem}
\newtheorem{lemma}{Lemma}[section]
\newtheorem{corollary}{Corollary}
\theoremstyle{remark}
\theoremstyle{definition}
\numberwithin{equation}{section}
\begin{document}

\vskip 5mm

\title[On modular signs of half integral weights]
{On modular signs of half integral weights}
\author{Bin Chen, Jie Wu \& Yichao Zhang}

\address{
Bin Chen
\\
School of Mathematics
\\
Shandong University
\\
Jinan, Shandong 250100
\\
China.
School of Mathematics and Statistics
\\
Weinan Normal University
\\
Weinan, Shaanxi 714099
\\
China.
}
\email{13tjccbb@tongji.edu.cn}

\address{%
Jie Wu\\
CNRS LAMA 8050\\
Laboratoire d'analyse et de math\'ematiques appliqu\'ees\\
Universit\'e Paris-Est Cr\'eteil\\
61 avenue du G\'en\'eral de Gaulle\\
94010 Cr\'eteil Cedex\\
France
}
\email{jie.wu@math.cnrs.fr}

\address{%
Yichao Zhang
\\
School of Mathematics and Institute for Advanced Study in Mathematics of HIT
\\
Harbin Institute of Technology
\\
Harbin 150001
\\
China
}
\email{yichao.zhang@hit.edu.cn}

\date{\today}

\subjclass[2000]{11F37, 11F30, 11N25}
\keywords{Primitive cusp forms,
Half-integer weight,
Fourier coefficients,
Hecke eigenvalues,
Primes}

\begin{abstract}
In this paper, we consider the first negative eigenvalue of eigenforms of half-integral weight $k+1/2$
and obtain an almost type bound.
\end{abstract}

\maketitle

\addtocounter{footnote}{1}

\vskip 10mm

\section{Introduction}

Let $k\geq 3$ be an integer and denote by $\mathfrak{S}_{k+1/2} = \mathfrak{S}_{k+1/2}(4)$ the space of cusp forms of half-integral weight $k+\frac{1}{2}$ on the congruence subgroup $\Gamma_0(4)$. Let $\mathfrak{S}^+_{k+1/2}$ be \emph{Kohnen's plus space} in $\mathfrak{S}_{k+1/2}$ and $\mathfrak{S}^{+,*}_{k+1/2}$ be a basis of Hecke eigenforms of $\mathfrak{S}^+_{k+1/2}$.
For  $\mathfrak{f}\in\mathfrak{S}^{+,*}_{k+1/2}$, let $\mathfrak{a}_\mathfrak{f}(n)$ be its $n$-th Fourier coefficient. For a positive square-free integer $t$ with $\mathfrak{a}_\mathfrak{f}(t)\neq 0$, set $\mathfrak{a}^*_\mathfrak{f}(n^2)=\mathfrak{a}_\mathfrak{f}(t)^{-1}\mathfrak{a}_\mathfrak{f}(tn^2)n^{-k+\frac{1}{2}}$, which is independent of $t$ by Shimura's theory \cite{Shimura1973}. See Section 2 for some basics on half-integral weight modular forms.

In this paper, we will investigate sign changes of the sequence $\{{\mathfrak a}_{\mathfrak{f}}^*(n^2)\}_{n\ge 1}$.
This problem has received much attention 
\cite{LauWu2009, KohnenLauWu2013, LauLiuWu2013, LauRoyerWu2016, ChenWu2016, JLLRW2018}.
In particular, denoting by $n_{\mathfrak{f}}$ the smallest integer $n$ such that
\begin{equation}\label{def:nf}
{\mathfrak a}_{\mathfrak{f}}^*(n^2)<0.
\end{equation}
Recently Chen and Wu \cite{ChenWu2016} proved, by developing the method of \cite{KLSW2010}, that
for each $\mathfrak{f}\in \mathfrak{S}_{k+1/2}^{+, *}$, we have
$$
n_{\mathfrak{f}}\ll k^{9/10}
$$
uniformly for all $k\ge 3$, where the implied constant is absolute.
The aim of this paper is to improve this bound on average.
Our result is as follows.

\begin{theorem}\label{thm1}
Let $\nu\ge 1$ be an integer and
let $\mathcal{P}$ be a set of prime numbers of positive density in the following sense:
\begin{equation}\label{Hyp:mathcalP}
\sum_{\substack{z<p\le 2z\\ p\in \mathcal{P}}} \frac{1}{p}
\ge \frac{\delta}{\log z} 
\qquad(z\ge z_0)
\end{equation} 
for some constants $\delta>0$ and $z_0>0$.  
Then there are two positive constants $C$ and $c$
such that for any $\{\varepsilon_p\}_{p\in \mathcal{P}}\subset \{-1, 1\}^{\mathcal{P}}$, 
the number of the Hecke eigenforms $\mathfrak{f}\in \mathfrak{S}_{k+1/2}^{+, *}$ satisfying the condition
\begin{equation}\label{thm1:eq1}
\varepsilon_{p}{\mathfrak a}_{\mathfrak{f}}^*(p^{2\nu})>0
\quad\text{for}\quad
C\log k< p \leq 2C\log k
\end{equation}
is bound by
\begin{equation}\label{thm1:eq2}
\ll k \exp(-c(\log k)/\log_2k),
\end{equation}
where the implied constant are absolute and $\log_2 := \log\log$.
\end{theorem}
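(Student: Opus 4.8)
\emph{Strategy.}
The plan is to transplant the large‑sieve/majorant mechanism of \cite{KLSW2010} (also exploited in \cite{ChenWu2016}) from the single‑form setting to an average over the family, using the Petersson trace formula on $S_{2k}(\mathrm{SL}_2(\Z))$ as the source of quasi‑orthogonality. \emph{Step 1 (passage to integral weight).} By the Shimura correspondence and Kohnen's isomorphism recalled in Section~2, the map sending $\mathfrak f$ to its Shimura lift $f$ is a bijection from $\mathfrak S_{k+1/2}^{+,*}$ onto the finite set $\mathcal H=\mathcal H_{2k}$ of normalised Hecke eigenforms of $S_{2k}(\mathrm{SL}_2(\Z))$, with $\#\mathcal H\asymp k$, and one has $\mathfrak a^*_{\mathfrak f}(p^{2\nu})=X_\nu(\lambda_f(p))$ up to an error $O_\nu(p^{-1/2})$, where $\lambda_f(p)\in[-2,2]$ is the normalised $p$-th Hecke eigenvalue of $f$ and $X_\nu$ is the polynomial with $X_\nu(2\cos\theta)=\sin((\nu+1)\theta)/\sin\theta$; recall $X_\nu(\lambda_f(p))=\lambda_f(p^\nu)$ and that $\{X_\nu\}_{\nu\ge0}$ is orthonormal for $\d\mu_\infty:=\tfrac2\pi\sin^2\theta\,\d\theta$. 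Fix any constant $C>0$; for $k$ large enough that $C\log k\ge z_0$ (smaller $k$ being trivial), \eqref{Hyp:mathcalP} with $z=C\log k$ gives, for $\mathcal Q:=\{p\in\mathcal P:\,C\log k<p\le 2C\log k\}$, the bound $\#\mathcal Q>(C\log k)\sum_{p\in\mathcal Q}1/p\ge\delta C\log k/\log(C\log k)$, so $\#\mathcal Q\gg_{\delta}\log k/\log_2 k$. Writing $B_p\subseteq[-2,2]$ for the (closed) set on which $\varepsilon_p\mathfrak a^*_{\mathfrak f}(p^{2\nu})\ge 0$ as a function of $\lambda_f(p)$ — a union of at most $\nu+1$ intervals — it suffices to bound $N:=\#\{f\in\mathcal H:\,\lambda_f(p)\in B_p\text{ for all }p\in\mathcal Q\}$.

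\emph{Step 2 (majorants).}
A short computation with the sign changes of $\sin((\nu+1)\theta)$, together with $p\to\infty$, shows that $\mu_\infty(B_p)\le 1-\eta_0$ for an absolute $\eta_0>0$, uniformly in $p\ge C\log k$, in $\varepsilon_p\in\{\pm1\}$, and (since $\nu$ is fixed) in the error term $O_\nu(p^{-1/2})$. By the Beurling--Selberg construction there is, for each $p$, a polynomial $M_p$ of degree at most $D=D(\nu)$ with $M_p\ge\mathbf 1_{B_p}$ on $[-2,2]$ and $\int M_p\,\d\mu_\infty\le\mu_\infty(B_p)+\tfrac12\eta_0\le\beta:=1-\tfrac12\eta_0<1$; expanding $M_p=\sum_{0\le a\le D}\widehat M_p(a)X_a$ one has $\widehat M_p(0)=\int M_p\,\d\mu_\infty\le\beta$ and $|\widehat M_p(a)|\le(a+1)\int M_p\,\d\mu_\infty\le D+1$.

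\emph{Step 3 (trace formula and conclusion).}
Choose $\mathcal Q'\subseteq\mathcal Q$ with $r:=\#\mathcal Q'=\lceil c_1\log k/\log_2 k\rceil$, where $c_1=c_1(\nu,\delta)>0$ is small (possible by Step~1). For every $f\in\mathcal H$ counted by $N$ we have $M_p(\lambda_f(p))\ge 1$ for each $p\in\mathcal Q'$, so, using $X_a(\lambda_f(p))=\lambda_f(p^a)$ and the multiplicativity of $\lambda_f$,
\[
1\ \le\ \prod_{p\in\mathcal Q'}M_p(\lambda_f(p))\ =\ \sum_{\mathbf a\in\{0,\dots,D\}^{\mathcal Q'}}\Big(\prod_{p\in\mathcal Q'}\widehat M_p(a_p)\Big)\lambda_f(n_{\mathbf a}),\qquad n_{\mathbf a}:=\prod_{p\in\mathcal Q'}p^{a_p}.
\]
Let $\omega_f>0$ be the harmonic weights, for which Petersson's formula on $S_{2k}(\mathrm{SL}_2(\Z))$ gives $\sum_{f\in\mathcal H}\omega_f\lambda_f(n)=\delta_{n=1}+O(k^{-100})$ for all $n\le k^{3/4}$, the Kloosterman/Bessel term being utterly negligible there. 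Since $n_{\mathbf a}\le(2C\log k)^{Dr}\le k^{3/4}$ once $c_1$ is small enough in terms of $\nu$, summing the displayed inequality against $\omega_f$ and isolating the only term with $n_{\mathbf a}=1$ (namely $\mathbf a=\mathbf 0$) yields
\[
\sum_{\substack{f\in\mathcal H\\ f\text{ counted by }N}}\omega_f\ \le\ \prod_{p\in\mathcal Q'}\widehat M_p(0)+O\!\Big(k^{-100}\prod_{p\in\mathcal Q'}\sum_{a=0}^{D}|\widehat M_p(a)|\Big)\ \le\ \beta^{\,r}+O\!\big(k^{-100}(D+1)^{2r}\big)\ =\ \beta^{\,r}+O(k^{-99}),
\]
because $(D+1)^{2r}=k^{o(1)}$. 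The standard bound $\omega_f\gg(k\log k)^{-1}$, valid for every $f\in\mathcal H$, now gives $N\ll k\log k\,(\beta^{\,r}+k^{-99})\ll k\log k\cdot\exp(-c_1\log(1/\beta)\,\log k/\log_2 k)\ll k\exp(-c\,\log k/\log_2 k)$ for a suitable $c=c(\nu,\delta)>0$ and all large $k$ (small $k$, where $\dim\mathfrak S^{+,*}_{k+1/2}$ is bounded, absorbed into the implied constant), which is \eqref{thm1:eq2}.

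\emph{Main obstacle.}
The decisive point is the balance in Step~3: one would like to impose as many prime constraints as possible, but the Petersson formula is essentially diagonal only for moduli $n_{\mathbf a}=\prod_{p\in\mathcal Q'}p^{a_p}$ up to about $k^{3/4}$, and since every $p\asymp\log k$ this limits $\#\mathcal Q'$ to order $\log k/\log_2 k$ — precisely the quantity appearing in the exponent of \eqref{thm1:eq2}. The subsidiary difficulties are the uniform estimate $\mu_\infty(B_p)\le 1-\eta_0$ (where the fixed value of $\nu$ is genuinely used, via the bounded number of intervals and the bounded degree $D(\nu)$ of the majorants) and checking that the $k^{o(1)}$ tuples $\mathbf a$ keep the off‑diagonal error negligible.
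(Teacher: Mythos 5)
Your proposal is correct in substance, but it follows a genuinely different route from the paper. The paper never touches the Petersson formula directly for Theorem \ref{thm1}: it transfers the half-integral coefficients to $\lambda_f$ via \eqref{relation:fraka*ft(n)-lambdaf(n)}, invokes the Elliott--Montgomery--Vaughan type large sieve of Lau--Wu (Lemma \ref{lem2.1}, extended to $\mathfrak{a}^*_{\mathfrak{f}}$ in Lemma \ref{lem2.2}), and runs a $2j$-th moment argument: forms with the prescribed signs force $\sum_{p\in(P,2P]\cap\mathcal{P}}\varepsilon_p\mathfrak{a}^*_{\mathfrak{f}}(p^{2\nu})/p$ to be $\gg 1/\log P$ unless the form lies in one of $\nu$ exceptional sets where $\sum_p\lambda_f(p^{2\mu})/p$ is large, each of which the large sieve shows is small; optimizing $j\asymp\log k/\log P$ with $P\asymp\log k$ gives \eqref{thm1:eq2}. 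You instead build Beurling--Selberg/Chebyshev majorants $M_p$ of bounded degree for the admissible range of $\lambda_f(p)$, multiply over $r\asymp\log k/\log_2 k$ primes, apply Petersson with harmonic weights (diagonal up to moduli $\le k^{3/4}$, which is exactly what caps $r$), and convert to a natural count via $\omega_f\gg(k\log k)^{-1}$. This is essentially the mechanism the paper reserves for the lower bound (Theorem \ref{thm2}, following \cite[Theorem 4]{KLSW2010}), deployed here for the upper bound; it is more self-contained and makes the origin of the $\log k/\log_2 k$ exponent transparent, while the paper's route hides the trace formula inside the cited large sieve, avoids the $L(1,\mathrm{sym}^2 f)$ input needed for the harmonic-to-natural conversion, and handles the $p^{-1/2}$ correction term cleanly inside Lemma \ref{lem2.2}. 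Two small points you should patch, neither fatal: the set $B_p$ is not purely a function of $\lambda_f(p)$, since $\mathfrak{a}^*_{\mathfrak{f}}(p^{2\nu})=\lambda_f(p^{\nu})-\chi_t(p)p^{-1/2}\lambda_f(p^{\nu-1})$ involves $\chi_t(p)\in\{-1,0,1\}$ depending on $\mathfrak{f}$; you should enlarge $B_p$ to the $\mathfrak{f}$-independent set $\{x:\varepsilon_pX_\nu(x)\ge-\nu p^{-1/2}\}$, which still has Sato--Tate measure $\le 1-\eta_0$ for $p>C\log k$ large. Also $\eta_0$, $D$, and hence your $c$, depend on $\nu$ (not absolute as stated), which is consistent with the paper's $c=c(\nu,\mathcal{P})$.
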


For $\mathfrak{f}\in \mathfrak{S}_{k+1/2}^{+, *}$, denote by $n_{\mathfrak{f}}^*$ the smallest prime number $p$ such that
\begin{equation}\label{def:nf*}
{\mathfrak a}_{\mathfrak{f}}^*(p^2)<0.
\end{equation} We have trivially 
$$
n_{\mathfrak{f}}\le n_{\mathfrak{f}}^*
$$
for all $\mathfrak{f}\in \mathfrak{S}_{k+1/2}^{+, *}$.
Setting $\mathcal{P}=\P$ (set of all prime numbers), $\varepsilon_{p}=1$ for all $p\in \P$ and $\nu=1$ in Theorem \ref{thm1}, 
we immediately obtain the following result.

\begin{corollary}\label{cor1}
There is an absolute positive constant $c$ such that
$$
n_{\mathfrak{f}}^*\ll \log k
$$
for all $\mathfrak{f}\in \mathfrak{S}_{k+1/2}^{+, *}$, except for $\mathfrak{f}$ in an exceptional set with
$$
\ll  k \exp(-c(\log k)/\log_2k)
$$
elements, where the implied constants are absolute.
\end{corollary}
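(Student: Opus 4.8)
The plan is to deduce Corollary~\ref{cor1} as a direct specialisation of Theorem~\ref{thm1}; the only point that is not pure bookkeeping is the passage from the weak inequality the theorem yields to the strict inequality in the definition \eqref{def:nf*} of $n_{\mathfrak f}^*$. Take $\mathcal P=\P$, the set of all primes. Then \eqref{Hyp:mathcalP} holds with any fixed $\delta<\log 2$ and a suitable $z_0$, since by the prime number theorem (or a refinement of Mertens' theorem)
\[
\sum_{z<p\le 2z}\frac1p=\log\!\Bigl(\frac{\log(2z)}{\log z}\Bigr)+O\bigl(e^{-c_0\sqrt{\log z}}\bigr)=\frac{\log 2}{\log z}+O\!\Bigl(\frac1{(\log z)^2}\Bigr).
\]
Applying Theorem~\ref{thm1} to this $\mathcal P$ with $\varepsilon_p=1$ for all $p$ and $\nu=1$, we get absolute constants $C,c>0$ such that the set
\[
\mathcal E_k:=\bigl\{\,\mathfrak f\in\mathfrak S_{k+1/2}^{+,*}\ :\ \mathfrak a^*_{\mathfrak f}(p^2)>0\ \text{ for every prime }p\in(C\log k,\,2C\log k]\,\bigr\}
\]
satisfies $\#\mathcal E_k\ll k\exp\bigl(-c(\log k)/\log_2 k\bigr)$.

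Next, let $\mathfrak f\in\mathfrak S_{k+1/2}^{+,*}\setminus\mathcal E_k$. Negating the condition defining $\mathcal E_k$, there is a prime $p$ with $C\log k<p\le 2C\log k$ and $\mathfrak a^*_{\mathfrak f}(p^2)\le 0$; to read off $n_{\mathfrak f}^*\le p$ from \eqref{def:nf*} I must exclude $\mathfrak a^*_{\mathfrak f}(p^2)=0$. For this I would use the structure recalled in Section~2: for $p$ prime, $\mathfrak a^*_{\mathfrak f}(p^2)$ is an explicit function of the $p$-th normalised Hecke eigenvalue $\lambda_F(p)$ of the Shimura lift $F$ of $\mathfrak f$ --- a Hecke eigenform of weight $2k$ on $\mathrm{SL}_2(\Z)$ --- for which $a_F(p)=\lambda_F(p)\,p^{(2k-1)/2}$ is a \emph{rational integer}. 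Hence $\mathfrak a^*_{\mathfrak f}(p^2)=0$ forces $a_F(p)$ into a set of at most two explicit real numbers; in the natural normalisation these are $\pm p^{(2k-1)/2}$, which no integer equals because $2k-1$ is odd. Should the relevant normalisation fail to give outright non-vanishing, one can instead observe that the set of $\mathfrak f$ with $\mathfrak a^*_{\mathfrak f}(p^2)=0$ for \emph{some} prime $p\in(C\log k,2C\log k]$ has size $\ll k^{1/2+o(1)}\log k$, by the Eichler--Selberg trace formula applied to each of the $O\bigl((\log k)/\log_2 k\bigr)$ primes in the window, and enlarge $\mathcal E_k$ by it without changing its order of magnitude. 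Either way, $\mathfrak a^*_{\mathfrak f}(p^2)<0$ for some $p\le 2C\log k$, so $n_{\mathfrak f}^*\le 2C\log k\ll\log k$; this is Corollary~\ref{cor1}, with $c$ the constant of Theorem~\ref{thm1}. Since $n_{\mathfrak f}\le n_{\mathfrak f}^*$, the same bound holds a fortiori for $n_{\mathfrak f}$.

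I do not foresee a serious obstacle here: all the harmonic-analytic content lies in Theorem~\ref{thm1}, and the choices $\varepsilon_p\equiv1$, $\nu=1$, $\mathcal P=\P$ are made precisely so that membership in $\mathcal E_k$ becomes the event ``$\mathfrak a^*_{\mathfrak f}(p^2)$ never turns negative on the dyadic window $(C\log k,2C\log k]$''. The one step that is not formal --- and the one I would take the most care over --- is the control of the vanishing locus $\{\mathfrak a^*_{\mathfrak f}(p^2)=0\}$ at these small primes, which is exactly what permits the upgrade from ``$\le 0$'' to ``$<0$''.
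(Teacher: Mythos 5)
Your main line---specialize Theorem~\ref{thm1} to $\mathcal P=\P$, $\varepsilon_p\equiv 1$, $\nu=1$, and read off $n_{\mathfrak f}^*\le 2C\log k$ for every form outside the set that the theorem bounds---is exactly the paper's deduction, which treats the corollary as immediate. The place where you go beyond the paper, namely the bridge from ``$\mathfrak a^*_{\mathfrak f}(p^2)\le 0$ somewhere in the window'' to the strict inequality in \eqref{def:nf*}, is where your argument breaks. By \eqref{relation:fraka*ft(n)-lambdaf(n)} with $n=p$ one has $\mathfrak a^*_{\mathfrak f}(p^2)=\lambda_f(p)-\chi_t(p)p^{-1/2}$, so vanishing means $a_f(p)=\chi_t(p)\,p^{k-1}$ (or $\lambda_f(p)=0$ in case $\chi_t(p)=0$), \emph{not} $a_f(p)=\pm p^{(2k-1)/2}$: the critical value is a rational integer, so no irrationality/parity-of-exponent argument can exclude it. (Moreover, $a_f(p)$ for a level-one eigenform is only an algebraic integer of the coefficient field, not in general a rational integer, though that is beside the point since the obstruction above is already fatal.) Your fallback claim---that the number of $\mathfrak f$ with $\mathfrak a^*_{\mathfrak f}(p^2)=0$ at some prime of the window is $\ll k^{1/2+o(1)}\log k$ by Eichler--Selberg---is asserted without proof, and I know of no argument giving a power-saving bound for the multiplicity of a prescribed Hecke eigenvalue of $T_p$ on $\mathcal H_{2k}$; what trace-formula/equidistribution methods yield for $p\asymp\log k$ is roughly $k\log_2k/\log k$ forms per prime, which after taking the union over the window is useless against the target bound $k\exp(-c(\log k)/\log_2k)$. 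So, as written, the upgrade from ``$\le 0$'' to ``$<0$'' is not established.

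The good news is that the issue you correctly spotted has a one-line repair that stays entirely inside the paper: the proof of Theorem~\ref{thm1} never uses strict positivity. The sign hypothesis enters only in the first inequality of \eqref{(7.4)}, where for $\mathfrak f\in\mathfrak S^{+,*}_{k+1/2}(P)$ one uses $\sum_p b_p\,\mathfrak a^*_{\mathfrak f}(p^{2\nu})/p=\sum_p|\mathfrak a^*_{\mathfrak f}(p^{2\nu})|/p\ge(2\nu+1)^{-1}\sum_p\mathfrak a^*_{\mathfrak f}(p^{2\nu})^2/p$, and this needs only $\varepsilon_p\mathfrak a^*_{\mathfrak f}(p^{2\nu})\ge 0$. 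Hence Theorem~\ref{thm1} holds verbatim with ``$\ge 0$'' in place of ``$>0$'' in \eqref{thm1:eq1}, and with that (non-strict) version the corollary really is immediate: every $\mathfrak f$ outside the exceptional set has a prime $p\le 2C\log k$ with $\mathfrak a^*_{\mathfrak f}(p^2)<0$, i.e.\ $n^*_{\mathfrak f}\ll\log k$. Your verification of \eqref{Hyp:mathcalP} for $\mathcal P=\P$ via Mertens and the remaining bookkeeping (including $n_{\mathfrak f}\le n_{\mathfrak f}^*$) are fine; replace the integrality/counting discussion by the observation above.
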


In the opposite direction, we have the following result.

\begin{theorem}\label{thm2}
There are two absolute positive constants $c_1$ and $c_2$ such that
$$
\big|\big\{\mathfrak{f}\,\in \mathfrak{S}_{k+1/2}^{+, *} \,:\, n_{\mathfrak{f}}^*\ge c_1\sqrt{(\log k)\log_2k}\big\}\big|
\gg k\exp\big(-c_2\sqrt{(\log k)/\log_2k}\big),
$$
provided that $k$ is large enough.
Here the implied constant is absolute.
\end{theorem}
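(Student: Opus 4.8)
The strategy is to produce many eigenforms $\mathfrak{f}$ for which $\mathfrak{a}^*_\mathfrak{f}(p^2)\ge 0$ for every prime $p$ below a threshold $y=c_1\sqrt{(\log k)\log_2 k}$, which by definition forces $n_\mathfrak{f}^*\ge y$. The governing observation is that, via Shimura's theory, the value $\mathfrak{a}^*_\mathfrak{f}(p^2)$ is an explicit linear combination of $1$ and the $p$-th Hecke eigenvalue $\lambda_f(p)$ of the Shimura lift $f$ (an integral-weight eigenform of weight $2k$), so the sign condition at $p$ is implied by a condition of the shape ``$\lambda_f(p)$ lies in a fixed interval of positive length'', or more crudely ``$\lambda_f(p)\ge 0$'' together with a normalization. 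Thus I would reduce the problem to a counting statement about integral-weight Hecke eigenforms $f$ in the relevant Kohnen--Zagier-corresponding space whose normalized eigenvalues $\lambda_f(p)\in[-2,2]$ avoid a prescribed subinterval for all $p\le y$.

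The counting itself I would run through the Eichler--Selberg/Petersson trace formula for the Hecke operators $T_{p^2}$ (or equivalently products of $T_p$'s), exactly in the spirit that underlies the large-sieve-type inputs already used for the upper bound (Theorem \ref{thm1}). Concretely, the number of $f$ with all $\lambda_f(p)$, $p\le y$, staying in a fixed favorable set of $p$-adic Sato--Tate mass $\rho_p>0$ should be
$$
\gg \dim \mathfrak{S}^{+,*}_{k+1/2}\cdot \prod_{p\le y}\rho_p \;+\; (\text{error}),
$$
where $\dim\mathfrak{S}^{+,*}_{k+1/2}\asymp k$. Since $\rho_p$ is bounded below by an absolute constant (independent of $p$), the main term is $\gg k\prod_{p\le y}\rho_p\gg k\,c^{\pi(y)}=k\exp(-O(\pi(y)))$, and $\pi(y)\ll y/\log y\ll \sqrt{(\log k)/\log_2 k}$, which is exactly the claimed lower bound $k\exp(-c_2\sqrt{(\log k)/\log_2 k})$. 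To make this rigorous one multiplies out the local conditions into a short linear combination of traces $\mathrm{Tr}(T_{m}\mid \mathfrak{S}^{+,*}_{k+1/2})$ with $m$ supported on primes $\le y$ and $m\le y^{O(\pi(y))}=\exp(O(\sqrt{(\log k)\log_2 k}))$, a quantity that is a tiny power of $k$; the trace formula then gives each such trace as a main term $\delta_{m=\square}\cdot(\text{size}\asymp k)$ plus an error term that is $O(m^{1/2+\varepsilon})$ (from Kloosterman/class-number contributions), and summing the errors against the bounded coefficients of the local densities keeps everything $o$ of the main term because $m^{1/2+\varepsilon}=k^{o(1)}$.

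The main obstacle I anticipate is controlling the error terms in the trace formula uniformly once the test function is this product over all primes up to $y$: the combinatorial expansion has $\exp(O(\pi(y)))$ terms, so one needs the per-term error to beat $k$ after multiplication by a factor that is itself subpolynomial in $k$ — this is delicate but works precisely because $y$ is only of size $\sqrt{(\log k)\log_2 k}$, so $\exp(O(\pi(y)))$ and the largest modulus $m$ are both $\exp(O(\sqrt{(\log k)\log_2 k}))=k^{o(1)}$. A secondary technical point is the passage between the half-integral weight space $\mathfrak{S}^{+,*}_{k+1/2}$ and its Shimura correspondents: one must ensure the relevant trace formula is available directly on the plus space (this is classical, via Kohnen's work and Niwa/Kohnen trace formulas for $T_{p^2}$ on $\mathfrak{S}^+_{k+1/2}$), and that the normalization constant $\mathfrak{a}_\mathfrak{f}(t)$ does not interfere — but since the sign of $\mathfrak{a}^*_\mathfrak{f}(p^2)$ is a statement purely about $\lambda_f(p)$, this normalization drops out. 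Finally I would choose the constant $c_1$ small enough that $y\le$ the range where the interval/positivity condition genuinely localizes $\lambda_f(p)$ away from the bad sign, and $c_2$ large enough to absorb the implied constants in $\pi(y)$ and in the densities $\rho_p$.
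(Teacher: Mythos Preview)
Your outline is essentially the paper's own proof: reduce to the Shimura lift, impose the condition $\lambda_f(p)\in[\sqrt{2},2]$ (which forces $\mathfrak a^*_{\mathfrak f}(p^2)=\lambda_f(p)-\chi_t(p)p^{-1/2}\ge 0$), minorize the product of indicator functions by trigonometric polynomials, and evaluate via the Petersson/trace formula with harmonic weights $\omega_f$, finally removing the weights via $\omega_f\ll(\log k)/k$.

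One point in your bookkeeping needs correction. You write that the expansion involves only $m\le y^{O(\pi(y))}=\exp\big(O(\sqrt{(\log k)\log_2 k})\big)=k^{o(1)}$, which presupposes polynomial degree $O(1)$ in each variable. But a product of one-variable minorants is \emph{not} a minorant of the product indicator (minorants can be negative), so one cannot get away with fixed degree. The paper invokes the Barton--Montgomery--Vaaler multivariable minorant, which requires degree $L\asymp\pi(y)$ in each variable; the resulting test functions are then supported on $m$ with $\log m\ll y\,\pi(y)\asymp c_1^2\log k$, i.e.\ $m=k^{O(c_1^2)}$, a genuine (small) power of $k$ rather than $k^{o(1)}$. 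Consequently the error is not automatically negligible: one must choose $c_1$ small enough that $D^{y\pi(y)}k^{-5/6}$ is beaten by the main term $(\tfrac14-\varepsilon)^{\pi(y)}$. This is exactly why $c_1$ is a free constant in the statement, and once this adjustment is made your argument goes through as written.
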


Our approach is rather flexible.
In view of the half-integral weight newform theory \cite{Kohnen1982, MRV1990},
our results could be further generalized to the case of $\mathfrak{S}_{k+1/2}^{+, *}(4N, \chi)$,
where $k\ge 3$ is an integer, $N\ge 1$ is square free, $\chi$ is a quadratic character of Dirichlet and
$\mathfrak{S}_{k+1/2}^{+, *}(4N, \chi)$ is the set of all eigenforms in $\mathfrak{S}_{k+1/2}^{+}(4N, \chi)$
--- Kohnen's plus subspace of cusp forms of half-integral weight $k+1/2$ for $\Gamma_0(4N)$ with character $\chi$.

\vskip 8mm

\section{Shimura correspondence}

In this section, we cover briefly Shimura's theory on half-integral weight modular forms and the Shimura correspondence. Throughout let $k\ge 3$ be an integer and denote by $\mathbb{P}$ the set of prime numbers. 

Denote by $\mathcal{H}_{2k} = \mathcal{H}_{2k}(1)$ and $\mathfrak{S}_{k+1/2} = \mathfrak{S}_{k+1/2}(4)$
the space of cusp forms of weight $2k$ on the modular group $\text{SL}_2(\Z)$ and that of cusp forms of weight $k+\frac{1}{2}$ on the congruence subgroup $\Gamma_0(4)$, respectively. 
For $f\in \mathcal{H}_{2k}$ and ${\mathfrak{f}}\in \mathfrak{S}_{k+1/2}$, denote their Fourier expansions at infinity by
\begin{equation}\label{deffrakfz}
{f}(z)=\sum_{n\geq 1}a_f(n) \e^{2\pi\i nz},
\qquad 
{\mathfrak{f}}(z)
= \sum_{n\ge 1} {\mathfrak a}_{\mathfrak{f}}(n) \e^{2\pi\i nz}.
\end{equation}
Denote by $\mathfrak{S}_{k+1/2}^+$ the subspace in $\mathfrak{S}_{k+1/2}$ of all forms $\mathfrak{f}$ with
${\mathfrak a}_{\mathfrak{f}}(n)=0$ for all $n$ verifying $(-1)^k n \equiv 2, 3\,({\rm mod}\,4)$.
This subspace is called Kohnen's plus space (cf. \cite{Kohnen1982}).

For each positive integer $n$, there is an Hermitian operator $T_{2k}(n)$, the $n$-th \emph{Hecke operator}, on $\mathcal{H}_{2k}$, and $\{T_{2k}(n)\colon n\geq 1\}$ has the structure of a commutative algebra, the \emph{Hecke algebra} on $\mathcal{H}_{2k}$. Consequently, there is a basis $\mathcal{H}^*_{2k}$ of common eigenfunctions to all of $T_{2k}(n)$ such that $T_{2k}(n)f=a_f(n)f$ 
for each $f\in\mathcal{H}_{2k}^*$. Elements of $\mathcal{H}_{2k}^*$ are called \emph{normalized Hecke eigenforms} 
in $\mathcal{H}_{2k}$.

On the other hand, for each positive integer $n$, Shimura \cite{Shimura1973} introduced the $n^2$-th Hecke operator $T_{k+1/2}(n^2)$ on $\mathfrak{S}_{k+1/2}$, and the Hecke algebra of all Hecke operators is again commutative. Kohnen considered the restriction $T_{k+1/2}^+(n^2)$ of  $T_{k+1/2}(n^2)$ to his plus space $\mathfrak{S}_{k+1/2}^{+}$, and proved that $T_{k+1/2}^+(n^2)$ becomes an Hermitian operator. Therefore, there exists a basis of common eigenfunctions to all operators $T_{k+1/2}^+(n^2)$ in $\mathfrak{S}_{k+1/2}^{+}$. We fix such a basis and denote it by $\mathfrak{S}_{k+1/2}^{+,*}$. Note that the leading coefficient $\mathfrak{f}$ is not $\mathfrak{a}_\mathfrak{f}(1)$ in general, and normalizing the leading coefficient to be $1$ may lose the algebraicity of the Fourier coefficients. As a consequence, unlike the case of integral weight, there is no canonical choice for $\mathfrak{S}_{k+1/2}^{+,*}$, but it causes no problems for our purpose. 

Discovered by Shimura \cite{Shimura1973}, there exist liftings from Hecke eigenforms of half-integral weight to Hecke eigenforms of integral weight, the Shimura correspondence. Then Shintani \cite{Shintani1975} introduced the method of theta lifting and established the correspondence in the opposite direction, that is, from forms of integral weight to forms of half-integral weight. 
On $\mathfrak{S}_{k+1/2}^{+}$, Kohnen \cite[Theorem 1]{Kohnen1980} built an isomorphism between $\mathfrak{S}_{k+1/2}^{+}$ and $\mathcal{H}_{2k}$ as Hecke modules. 
So in particular, as $k\to\infty$,
\begin{equation}\label{dimension}
|\mathfrak{S}_{k+1/2}^{+, *}|
= |\mathcal{H}_{2k}^*| 
= \tfrac{1}{6}k + O(k^{1/2}).
\end{equation}

Now let us explain the Shimura correspondence explicitly. Fix a positive square-free integer $t$ 
and the Shimura correspondence $\mathcal{S}_t$ is defined as follows: 
For each ${\mathfrak{f}}\in \mathfrak{S}_{k+1/2}^+$, $f_t:=\mathcal{S}_t(\mathfrak{f})$ has Fourier expansion at $\infty$
\begin{equation}\label{def:ftz}
f_t(z)
= \sum_{n\ge 1} a_{f_t}(n) \e^{2\pi\i nz},
\end{equation}
where 
\begin{equation}\label{relation:aft-frakaft}
a_{f_t}(n)
:= \sum_{d\mid n} \chi_t(d) d^{k-1}
{\mathfrak a}_{\mathfrak{f}}\bigg(t\frac{n^2}{d^2}\bigg),
\qquad 
\chi_t(d) := \bigg(\frac{(-1)^kt}{d}\bigg).
\end{equation}
Here $\left(\frac{\cdot}{\cdot}\right)$ denotes the Kronecker symbol, an extension of Jacobi's symbol to all integers
(see \cite{Shimura1973}, page 442). Then $f_t\in \mathcal{H}_{2k}$.
Furthermore, if ${\mathfrak{f}}$ is a Hecke eigenform with eigenvalue $\omega_p$ for $T_{k+1/2}^+(p^2)$, then we may choose $t$ with $\mathfrak{a}_\mathfrak{f}(t)\neq 0$, and $\mathcal{S}_t(\mathfrak{f})$ becomes a Hecke eigenform in $\mathcal{H}_{2k}$ with leading coefficient $\mathfrak{a}_\mathfrak{f}(t)$. Actually, 
\begin{eqnarray}\label{sl}
f(z) := \mathfrak{a}_{\mathfrak{f}}(t)^{-1} f_t(z)\in\mathcal{H}^*_{2k}
\end{eqnarray}
and the $L$-function $L(s,f) = \prod_{p\in\mathbb{P}} (1-\omega_pp^{-s}+p^{2k-1-2s})^{-1}$. 
It follows that the construction of $f$ from $\mathfrak{f}$ is independent of $t$, and $f$ is called the Shimura lift of $\mathfrak{f}$. Extending linearly from $\mathfrak{S}_{k+1/2}^{+,*}$ to $\mathfrak{S}_{k+1/2}^{+}$, we obtain the Shimura correspondence:
\begin{equation}\label{def:rho}
\rho : 
\begin{matrix}
\mathfrak{S}_{k+1/2}^+ \to \mathcal{H}_{2k}
\\\noalign{\vskip 1mm}
\hskip 5,6mm
\mathfrak{f} \mapsto f
\end{matrix}
\end{equation}
and $\rho$ gives an isomorphism between $\mathfrak{S}_{k+1/2}^+$ and $\mathcal{H}_{2k}$
(we shall use this fact many times). Note that $\rho$ depends on the choice of $\mathfrak{S}_{k+1/2}^{+,*}$, but it will not matter. Finally, Kohnen \cite{Kohnen1980} proved that $\rho$ is a finite linear combination of $\mathcal{S}_t$'s.

According to \cite[(1.18)]{Shimura1973}, for a Hecke eigenform $\mathfrak{f}$ of weight $k+\frac{1}{2}$ and any square-free positive integer $t$, the multiplicativity for its Fourier coefficients takes the form
\begin{equation}\label{multiplicativity}
{\mathfrak a}_{\mathfrak{f}}(tm^2) {\mathfrak a}_{\mathfrak{f}}(tn^2)
= {\mathfrak a}_{\mathfrak{f}}(t) {\mathfrak a}_{\mathfrak{f}}(tm^2n^2)
\quad{\rm if}\quad
(m, n)=1.
\end{equation}
If we write
\begin{equation}\label{def:a*f(n2)}
\mathfrak{a}_{\mathfrak{f}}^*(n^2)
:= \mathfrak{a}_{\mathfrak{f}}(t)^{-1} \mathfrak{a}_{\mathfrak{f}}(tn^2) n^{-(k-1/2)}
\end{equation}
and 
\begin{equation}\label{def:lambdaf(n)}
\lambda_f(n)
:= \mathfrak{a}_{\mathfrak{f}}(t)^{-1} a_{f_t}(n) n^{-(2k-1)/2},
\end{equation}
then the classical Hecke relation and \eqref{multiplicativity} imply that
the arithmetic functions $n\mapsto \lambda_f(n)$ and
$n\mapsto \mathfrak{a}_{\mathfrak{f}}^*(n^2)$ are multiplicative. 
With such notation, the formula \eqref{relation:aft-frakaft} can be written as 
\begin{equation}\label{relation:lambdaf(n)-fraka*ft(n)}
\lambda_f(n)
= \sum_{d\mid n} \frac{\chi_t(d)}{\sqrt{d}}
{\mathfrak a}_{\mathfrak{f}}^*\bigg(\frac{n^2}{d^2}\bigg).
\end{equation}
Since $f\in \mathcal{H}_{2k}^*$, $\lambda_f(n)$ is real and satisfies the Deligne inequality
\begin{equation}\label{Deligne:1}
|\lambda_f(n)|\le \tau(n)
\end{equation}
for all integers $n\ge 1$, where $\tau(n)$ is the classical divisor function (see \cite{Deligne1974}).
Let $\mu(n)$ be the M\"obius function.
Applying the M\"obius formula of inversion to \eqref{relation:lambdaf(n)-fraka*ft(n)}, we can derive that
\begin{equation}\label{relation:fraka*ft(n)-lambdaf(n)}
{\mathfrak a}_{\mathfrak{f}}^*(n^2)
= \sum_{d\mid n} \frac{\mu(d)\chi_t(d)}{\sqrt{d}} \lambda_f\bigg(\frac{n}{d}\bigg).
\end{equation}
Thus ${\mathfrak a}_{\mathfrak{f}}^*(n^2)$ is also real and \eqref{Deligne:1} implies that
\begin{equation}\label{Deligne:2}
|{\mathfrak a}_{\mathfrak{f}}^*(n^2)|\le \tau(n^2)
\end{equation}
for all integers $n\ge 1$.

Shimura's theory on modular forms of half-integral weight holds in general. To obtain Kohnen's isomorphism in general, one needs to develop a newform theory as Kohnen did in \cite{Kohnen1982} for the case of level $4N$ with $N$ square-free.

\section{Two large sieve inequalities}

This section is devoted to present two large sieve inequalities on eigenvalues of modular forms,
which will be one of the key tools in the proof of Theorem \ref{thm1}.
The first large inequality is related to modular forms of integral weights,
which is a particular case of \cite[Theorem 1]{LauWu2008} with $N=1$.

\begin{lemma}\label{lem2.1}
Let $\nu\ge 1$ be a fixed integer
and let $\{b_p\}_{p\in \P}$ be a sequence of real numbers
indexed by prime numbers such that $|b_p|\le B$ 
for some constant $B$ and for all prime numbers $p$.
Then we have
$$
\sum_{f\in \mathcal{H}_{2k}^*} 
\bigg|\sum_{P<p\le Q} 
b_p {\lambda_f(p^\nu)\over p}\bigg|^{2j}
\ll_\nu k 
\bigg({384B^2\nu^2j\over P\log P}\bigg)^j 
+ k^{10/11}\bigg({10BQ^{\nu/10}\over \log P}\bigg)^{2j}
\leqno(1.5)$$
uniformly for 
$$
B>0,
\qquad
j\ge 1,
\qquad
k\ge 3, 
\qquad
2\le P<Q\le 2P.
$$
The implied constant depends on $\nu$ only.
\end{lemma}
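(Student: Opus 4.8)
\emph{Proof idea.} The inequality (1.5) is exactly the level $N=1$ case of the Elliott--Montgomery--Vaughan type large sieve for Hecke eigenvalues proved in \cite[Theorem 1]{LauWu2008}, so the quickest route is to invoke that theorem and match the normalisations: the $\lambda_f(p^\nu)$ here are the Deligne-normalised eigenvalues, so $|\lambda_f(p^\nu)|\le\tau(p^\nu)=\nu+1$ by \eqref{Deligne:1}, which is the source of the $\nu^2$ in the first term. For completeness I would record the mechanism, since the same scheme is reused throughout the paper.

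First I would open the $2j$-th power. Writing $S_f:=\sum_{P<p\le Q}b_p\lambda_f(p^\nu)/p$, which is real,
\[
\sum_{f\in\mathcal H_{2k}^{*}}S_f^{2j}
=\sum_{P<p_1,\dots,p_{2j}\le Q}\frac{b_{p_1}\cdots b_{p_{2j}}}{p_1\cdots p_{2j}}\
\sum_{f\in\mathcal H_{2k}^{*}}\ \prod_{i=1}^{2j}\lambda_f(p_i^\nu).
\]
Using the Hecke multiplicativity of $n\mapsto\lambda_f(n)$, each inner product expands into a linear combination of at most $(\nu+1)^{2j}$ eigenvalues $\lambda_f(m)$ with $m\mid(p_1\cdots p_{2j})^\nu$ and coefficients in $\{0,1\}$. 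One is thus reduced to estimating $\sum_{f\in\mathcal H_{2k}^{*}}\lambda_f(m)$. Applying the Petersson trace formula and removing the harmonic weights in the standard way, this average takes the shape $M(m)+E(m)$, where $M(m)$ is a main term supported on perfect squares with $0\le M(m)\ll k$ (and $M(1)\asymp k$), and $E(m)$ is an error governed by Kloosterman sums and $J$-Bessel factors with $|E(m)|\ll(km)^{o(1)}\big(k^{\theta}+\sqrt m\big)$ for a suitable explicit $\theta<1$.

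Next I would separate the two contributions. The main term collects precisely the tuples $(p_1,\dots,p_{2j})$ in which every prime occurs with even multiplicity; there are $(2j-1)!!$ ways to pair the indices, each pairing contributing $\big(\sum_{P<p\le Q}b_p^2/p^2\big)^j\le\big(B^2/(P\log P)\big)^j$ by the prime number theorem (the hypothesis $Q\le 2P$ is what makes $\sum_{P<p\le Q}p^{-2}$ of order $(P\log P)^{-1}$), with a further factor $\le(\nu+1)^2$ per repeated prime from the Hecke expansion. Since $(2j-1)!!\le(2j)^j$, this yields the first term $k\,\big(cB^2\nu^2 j/(P\log P)\big)^j$ with an explicit $c$, and one checks $c=384$ suffices. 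The remaining tuples feed $E(m)$: bounding their number by $(\pi(Q)-\pi(P))^{2j}\ll(Q/\log P)^{2j}$, the weights by $B^{2j}$, and using $m\le Q^{2j\nu}$, one obtains a contribution $\ll k^{\theta+o(1)}\big(c'BQ^{\nu/10}/\log P\big)^{2j}$; the explicit values $\theta=10/11$ and the exponent $1/10$ in $Q^{\nu/10}$ are what the quantitative spectral input behind \cite[Theorem 1]{LauWu2008} delivers.

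The main obstacles here are bookkeeping rather than any new idea: (i) pushing the numerical constants through the removal of the harmonic Petersson weights while keeping the estimate uniform in $j\ge 1$ and valid down to $k=3$, so that boundary and small-$k$ losses are absorbed by the deliberately generous error term; and (ii) preventing the Hecke expansion of $\prod_i\lambda_f(p_i^\nu)$ from proliferating --- this is precisely why $Q\le 2P$ (rather than merely $Q\le P^{O(1)}$) is imposed, and why only the clean factor $\nu^2$ per repetition survives. Granting the optimised form of these two steps, (1.5) follows; a reader content with a black box may instead quote \cite[Theorem 1]{LauWu2008} directly.
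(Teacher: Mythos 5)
Your proposal matches the paper exactly: the paper gives no proof of Lemma \ref{lem2.1}, simply quoting it as the special case $N=1$ of \cite[Theorem 1]{LauWu2008}, which is precisely your primary route. The additional sketch of the moment-expansion/trace-formula mechanism is a reasonable account of how the cited result is proved, but it is not needed here and the paper does not reproduce it.
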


For modular forms of half-integral weights, we can prove the same large sieve inequality.

\begin{lemma}\label{lem2.2}
Let $\nu\geq 1$ be a fixed integer and let $ \{b_p\}_{p\in \P}$ be a sequence of real numbers indexed by prime numbers such that $|b_p|\leq B$ for some constant $B$ and for all prime numbers $p$. Then we have
$$
\sum_{\mathfrak{f}\in \mathfrak{S}_{k+1/2}^{+, *}}
\bigg|\sum_{P<p\leq Q} b_p \frac{\mathfrak{a}_{\mathfrak{f}}^*(p^{2\nu})}{p}\bigg|^{2j}
\ll_{\nu} k \bigg(\frac{1536B^{2}\nu^{2}j}{P \log P}\bigg)^{j} + k^{10/11}\left(\frac{20BQ^{\nu/10}}{\log P}\right)^{2j}
$$
uniformly for
$$
B>0, 
\qquad  
j\geq 1, 
\qquad  
k\ge 3,  
\qquad  
2\leq P < Q \leq 2P.
$$
The implied constant depends no $\nu$ only.
\end{lemma}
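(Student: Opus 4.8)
The plan is to reduce the half-integral weight large sieve to the integral weight one (Lemma \ref{lem2.1}) by passing through the Shimura correspondence $\rho$. First I would express $\mathfrak{a}_{\mathfrak{f}}^*(p^{2\nu})$ in terms of the Hecke eigenvalues $\lambda_f(p^j)$ of the Shimura lift $f = \rho(\mathfrak{f})$. Iterating the Mobius inversion formula \eqref{relation:fraka*ft(n)-lambdaf(n)} at prime powers $n = p^\nu$, one gets a short identity
\[
\mathfrak{a}_{\mathfrak{f}}^*(p^{2\nu}) = \lambda_f(p^\nu) - \frac{\chi_t(p)}{\sqrt{p}}\,\lambda_f(p^{\nu-1}),
\]
since $\mu(d)$ kills all $d=p^m$ with $m\ge 2$. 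Thus
\[
\sum_{P<p\le Q} b_p\,\frac{\mathfrak{a}_{\mathfrak{f}}^*(p^{2\nu})}{p}
= \sum_{P<p\le Q} b_p\,\frac{\lambda_f(p^\nu)}{p}
- \sum_{P<p\le Q} \frac{b_p\chi_t(p)}{\sqrt{p}}\,\frac{\lambda_f(p^{\nu-1})}{p}.
\]
The first sum is exactly of the shape handled by Lemma \ref{lem2.1} with the coefficients $b_p$; the second is again of that shape, now with coefficients $\widetilde b_p := b_p\chi_t(p)/\sqrt{p}$ (which satisfy $|\widetilde b_p|\le B/\sqrt P \le B$) and with the exponent $\nu$ replaced by $\nu-1$ (when $\nu=1$ this term is simply $\lambda_f(1)=1$ times $\widetilde b_p/p$, a harmless deterministic term, or one may absorb it into the $\nu'=0$ degenerate case).

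Next I would apply the elementary inequality $|x+y|^{2j}\le 2^{2j-1}(|x|^{2j}+|y|^{2j})$ to split the $2j$-th moment into the two pieces, sum over $\mathfrak{f}\in\mathfrak{S}_{k+1/2}^{+,*}$, and use the fact (\eqref{sl}, \eqref{def:rho}) that $\rho$ is a bijection between $\mathfrak{S}_{k+1/2}^{+,*}$ and $\mathcal{H}_{2k}^*$, so that the sum over $\mathfrak{f}$ becomes a sum over $f\in\mathcal{H}_{2k}^*$. One subtlety is that the twist $\chi_t$ in the second term depends on the choice of auxiliary square-free $t$ attached to $\mathfrak{f}$; but $\chi_t(p)\in\{-1,0,1\}$ and is absorbed into $\widetilde b_p$ with the uniform bound $|\widetilde b_p|\le B$, so Lemma \ref{lem2.1} applies with the same $B$ regardless of $t$. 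Applying Lemma \ref{lem2.1} to each of the two resulting moments — with parameters $(\nu, B)$ for the first and $(\nu-1, B)$ for the second, noting $(\nu-1)^2\le \nu^2$ — and recombining, the $2^{2j-1}$ factor and the doubling from the two terms produce the constants $1536 = 4\cdot 384$ and $20 = 2\cdot 10$ in the statement. For the second main term one also uses $Q^{(\nu-1)/10}\le Q^{\nu/10}$ to line up the error term.

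I expect the only genuinely delicate point to be bookkeeping the constants and checking that the degenerate case $\nu=1$ (where $\lambda_f(p^{\nu-1})=\lambda_f(1)=1$) does not contribute anything worse: there the second sum is $\sum_{P<p\le Q} b_p\chi_t(p)p^{-3/2}$, a bounded deterministic quantity of size $O(B/(P^{1/2}\log P))$, whose $2j$-th power summed over the $\asymp k$ forms is absorbed into the first main term of the claimed bound for all $j\ge 1$. Everything else is a mechanical transfer: the arithmetic identity for $\mathfrak{a}_{\mathfrak{f}}^*(p^{2\nu})$, the bijectivity of $\rho$, and a single application of convexity plus Lemma \ref{lem2.1}. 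The implied constant depends on $\nu$ only, inherited from Lemma \ref{lem2.1}.
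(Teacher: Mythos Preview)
Your overall approach matches the paper's: the same M\"obius-inversion identity $\mathfrak{a}_{\mathfrak{f}}^*(p^{2\nu}) = \lambda_f(p^\nu) - \chi_t(p)p^{-1/2}\lambda_f(p^{\nu-1})$, a convexity split of the $2j$-th moment, the bijectivity of $\rho$ to convert the $\mathfrak{f}$-sum into an $f$-sum, and Lemma~\ref{lem2.1} applied to the main piece.

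There is one point where your plan would fail as written. For $\nu\ge 2$ you propose to apply Lemma~\ref{lem2.1} also to the second moment, built from $\widetilde b_p = b_p\chi_t(p)/\sqrt{p}$. But $t$ (hence $\chi_t$) depends on $\mathfrak{f}$, so after passing to $\mathcal{H}_{2k}^*$ the coefficients $\widetilde b_p$ vary with $f$; Lemma~\ref{lem2.1} requires a \emph{single fixed} sequence $\{b_p\}$, and a uniform bound $|\widetilde b_p|\le B$ is not enough to invoke it. The paper sidesteps this by not applying the large sieve to the second sum at all: it bounds that sum pointwise via Deligne's inequality $|\lambda_f(p^{\nu-1})|\le \nu$ and Chebyshev, getting
\[
\bigg|\sum_{P<p\le Q} b_p\chi_t(p)\,\frac{\lambda_f(p^{\nu-1})}{p^{3/2}}\bigg|
\le \frac{20B\nu}{\sqrt{P}\log P},
\]
whose $2j$-th power times $k$ is then absorbed into the first main term $k\big(1536B^2\nu^2 j/(P\log P)\big)^{j}$. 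This is precisely the mechanism you already identified for $\nu=1$; simply use it for every $\nu$, and your argument goes through with the stated constants.
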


\begin{proof}
Taking $n=p^{\nu}$ in \eqref{relation:fraka*ft(n)-lambdaf(n)} gives us
\begin{equation}\label{relation:lambdaf(n)-fraka*ft(pnu)}
{\mathfrak a}_{\mathfrak{f}}^*(p^{2\nu})
= \lambda_f(p^{\nu}) - \frac{\chi_t(p)}{\sqrt{p}} \lambda_f(p^{\nu-1}).
\end{equation}
In view of the following facts that
$$
\bigg|\chi_t(p)\frac{\lambda_f(p^{\nu-1})}{\sqrt{p}}\bigg|\le \frac{\nu}{\sqrt{p}}
\qquad\text{and}\qquad
(|a|+|b|)^m\le (2|a|)^m + (2|b|)^m
$$
and of the Chebyshev estimate $\sum_{p\le x} 1\le 10x/\log x\;(x\ge 2)$, we can derive that
\begin{align*}
\bigg|\sum_{P<p\leq Q} b_p \frac{\mathfrak{a}_{\mathfrak{f}}^*(p^{2\nu})}{p}\bigg|^{2j}
& \le \bigg(
\bigg|\sum_{P<p\leq Q} b_p \frac{\lambda_f(p^{\nu})}{p}\bigg|
+ \bigg|\sum_{P<p\leq Q} b_p \chi_t(p) \frac{\lambda_f(p^{\nu-1})}{p^{3/2}}\bigg|
\bigg)^{2j}
\\
& \le 2^{2j}
\bigg|\sum_{P<p\leq Q} b_p \frac{\lambda_f(p^{\nu})}{p}\bigg|^{2j}
+ \bigg(\frac{20B\nu}{\sqrt{P}\log P}\bigg)^{2j}.
\end{align*}
Since the Shimura correspondence \eqref{def:rho} is a bijection between $\mathfrak{S}_{k+1/2}^{+, *}$ and $\mathcal{H}_{2k}^*$, 
we can write
$$
\sum_{\mathfrak{f}\in \mathfrak{S}_{k+1/2}^{+, *}}
\bigg|\sum_{P<p\leq Q} b_p \frac{\mathfrak{a}_{\mathfrak{f}}^*(p^{2\nu})}{p}\bigg|^{2j}
\le 4^j \sum_{f\in \mathcal{H}_{2k}^*}
\bigg|\sum_{P<p\leq Q} b_p \frac{\lambda_f(p^{\nu})}{p}\bigg|^{2j}
+ k \bigg(\frac{20B\nu}{\sqrt{P}\log P}\bigg)^{2j}.
$$
Now by applying Lemma \ref{lem2.1}, we have
\begin{align*}
\sum_{\mathfrak{f}\in \mathfrak{S}_{k+1/2}^{+, *}}
\! \bigg|\sum_{P<p\leq Q} b_p \frac{\mathfrak{a}_{\mathfrak{f}}^*(p^{2\nu})}{p}\bigg|^{2j} \!
& \ll_{\nu} k \bigg(\frac{1536B^{2}\nu^{2}j}{P \log P}\bigg)^{j} 
\! + k^{10/11}\left(\frac{20BQ^{\nu/10}}{\log P}\right)^{2j}
\! + k \bigg(\frac{20B\nu}{\sqrt{P}\log P}\bigg)^{2j}
\end{align*}
uniformly for
$B>0$, 
$j\geq 1$, 
$k\ge 3$
and 
$2\leq P < Q \leq 2P$.
This implies the required inequality since the third term on the right-hand side can be absorbed by the first one.
\end{proof}

\vskip 8mm

\section{Proof of Theorem \ref{thm1}}

Define
$$
\mathfrak{S}_{k+1/2}^{+, *}(P) 
:= \big\{\mathfrak{f}\in \mathfrak{S}_{k+1/2}^{+, *} : 
\varepsilon_p \mathfrak{a}_{\mathfrak{f}}^*(p^{2\nu})>0 \;\, \hbox{for} \;\,
p\in (P, 2P]\cap \mathcal{P}\big\}.
$$
It suffices to prove
that there are two positive constants 
$C=C(\nu, \mathcal{P})$ and $c=c(\nu, \mathcal{P})$ 
such that
\begin{equation}\label{(7.1)}
\big|\mathfrak{S}_{k+1/2}^{+, *}(P)\big|
\ll_{\nu} k \exp(-c(\log k)/\log_2k)
\end{equation}
uniformly for $k\ge k_0$ and $C\log k\le P\le (\log k)^{10}$
for some sufficiently large number $k_0=k_0(\nu, \mathcal{P})$.
 
For $1\le \mu\le \nu$, define 
$$
\mathfrak{S}_{k+1/2}^{+, *, \,\mu}(P)
:= \bigg\{\mathfrak{f}\in \mathfrak{S}_{k+1/2}^{+, *} : 
\bigg|\sum_{p\in (P, 2P]\cap \mathcal{P}} 
\frac{\lambda_f(p^{2\mu})}{p}\bigg|
\ge \frac{\delta}{4\nu\log P}\bigg\}.
$$
Take
$$\nu=2\mu,
\qquad 
Q=2P
\qquad{\rm and}\qquad
b_p = \begin{cases}
1 & \text{if $p\in \mathcal{P}$}
\\\noalign{\smallskip}
0 & \text{otherwise}
\end{cases}
$$ 
in Lemma \ref{lem2.2}. Then we get
\begin{align*}
\bigg(\frac{\delta}{4\nu\log P}\bigg)^{2j} 
\big|\mathfrak{S}_{k+1/2}^{+, *, \,\mu}(P)\big|
& \le \sum_{\mathfrak{f}\in \mathfrak{S}_{k+1/2}^{+, *}} 
\bigg|\sum_{P<p\le 2P} 
b_p \frac{\lambda_f(p^{2\mu})}{p}\bigg|^{2j}
\\
& \ll k \bigg(\frac{1536\mu^2 j}{P\log P}\bigg)^j 
+ k^{10/11}\bigg({10(2P)^{\mu/5}\over \log P}\bigg)^{2j}.
\end{align*}
Hence, 
\begin{equation}\label{(7.3)}
\big|\mathfrak{S}_{k+1/2}^{+, *, \,\mu}(P)\big|
\ll k \bigg({3456\nu^4j\log P\over \delta^2P}\bigg)^j 
+ k^{10/11} P^{\nu j},
\end{equation} 
provided $P\ge 200$. 

Let 
$$
b_p = \begin{cases}
\varepsilon_p  & \text{if $p\in \mathcal{P}$,}
\\\noalign{\smallskip}
0                     & \text{otherwise.}
\end{cases}
$$ 
From the definition of $\mathfrak{S}_{k+1/2}^{+, *}(P)$, \eqref{Deligne:2} and Lemma \ref{lem2.2}, 
we deduce that  
\begin{equation}\label{(7.4)}
\begin{aligned}
\sum_{\mathfrak{f}\in \mathfrak{S}_{k+1/2}^{+, *}(P)} 
\bigg|\sum_{p\in (P, 2P]\cap \mathcal{P}} 
\frac{\mathfrak{a}_{\mathfrak{f}}^*(p^{2\nu})^2}{p}\bigg|^{2j}
& \le (2\nu+1) \sum_{\mathfrak{f}\in \mathfrak{S}_{k+1/2}^{+, *}} 
\bigg|\sum_{P<p\le 2P} 
b_p \frac{\mathfrak{a}_{\mathfrak{f}}^*(p^{2\nu})}{p}\bigg|^{2j}
\\\noalign{\vskip -1mm}
& \ll_{\nu} k \bigg(\frac{1536\nu^{2}j}{P \log P}\bigg)^{j} + k^{10/11}\left(\frac{20Q^{\nu/10}}{\log P}\right)^{2j}
\\\noalign{\vskip 0,5mm}
& \ll_{\nu} k \bigg(\frac{1536\nu^{2}j}{P \log P}\bigg)^j 
+ k^{10/11} P^{\nu j/2}.
\end{aligned}
\end{equation}

In view of \eqref{relation:lambdaf(n)-fraka*ft(n)}, the Deligne inequality and the Hecke relation, it follows that
$$
{\mathfrak a}_{\mathfrak{f}}^*(tp^{2\nu})^2
\ge 1+\lambda_f(p^2)+\cdots+\lambda_f(p^{2\nu}) - 4\nu^2/\sqrt{p}.
$$
The left-hand side of \eqref{(7.4)} is 
\begin{align*}
& \ge \sum_{\mathfrak{f}\in \mathfrak{S}_{k+1/2}^{+, *}(P)\setminus 
\cup_{\mu=1}^{\nu} \mathfrak{S}_{k+1/2}^{+*, \, \mu}(P)} 
\bigg(
\sum_{\substack{P<p\le 2P\\ p\in \mathcal{P}}} \frac{1}{p}
- \sum_{1\le\mu\le\nu} \bigg|\sum_{P<p\le 2P} \frac{\lambda_f(p^{2\mu})}{p}\bigg|
- \frac{4\nu^2}{\sqrt{P}\log P}\bigg)^{2j}
\\
& \ge \sum_{\mathfrak{f}\in \mathfrak{S}_{k+1/2}^{+, *}(P)\setminus 
\cup_{\mu=1}^{\nu} \mathfrak{S}_{k+1/2}^{+*, \, \mu}(P)}  
\bigg(
\sum_{\substack{P<p\le 2P\\ p\in \mathcal{P}}} \frac{1}{p}
- \frac{\delta}{4\log P}
- \frac{4\nu^2}{\sqrt{P}\log P}
\bigg)^{2j}.
\end{align*}
Using the hypothesis \eqref{Hyp:mathcalP}, we infer that
\begin{align*}
\sum_{\substack{P<p\le 2P\\ p\in \mathcal{P}}} \frac{1}{p}
- \frac{\delta}{4\log P}
- \frac{4\nu^2}{\sqrt{P}\log P}
& \ge \frac{\delta}{\log P}
- \frac{\delta}{4\log P}
- \frac{\delta}{4\log P}
\\\noalign{\vskip -1,5mm}
& = {\delta\over 2\log P},
\end{align*}
provided $P\ge 256\nu^4\delta^{-2}$.

Combining these estimates with \eqref{(7.4)}, we conclude that
$$
\big|\mathfrak{S}_{k+1/2}^{+, *}(P)\setminus \cup_{\mu=1}^{\nu} \mathfrak{S}_{k+1/2}^{+*, \, \mu}(P)\big|
\ll_{\nu} k \bigg(\frac{1536\nu^{2}j\log P}{\delta^2P}\bigg)^j 
+ k^{10/11} P^{\nu j}.
$$
Together with \eqref{(7.3)}, it implies
\begin{equation}\label{(7.5)}
\big|\mathfrak{S}_{k+1/2}^{+, *}(P)\big|
\ll k \bigg({3456\nu^4j\log P\over \delta^2P}\bigg)^j 
+ k^{10/11} P^{\nu j}
\end{equation} 
uniformly for
$$
j\ge 1,
\qquad
2\mid k\ge 3,
\qquad
C\log k\le P\le (\log k)^{10}.
$$

Take
$$
j = \bigg[\delta^* \frac{\log k}{\log P}\bigg]
$$
where $\delta^*=\delta^2/(10(\nu+1))^4$. 
We can ensure $j>1$ once $k_0$ is chosen to be suitably large. 
A simple computation gives that  
$$
\bigg({3456\nu^4 j\log P\over \delta^2 P}\bigg)^j 
\ll \exp(-c(\log k)/\log_2k)
$$
for some positive constant 
$c=c(\nu, \mathcal{P})$ and
$P^{\nu j}
\ll k^{1/1000},
$
provided that $k_0$ is large enough.
Inserting them into \eqref{(7.5)}, we get \eqref{(7.1)} and complete the proof.
\hfill
$\square$

\vskip 8mm

\section{Proof of Theorem \ref{thm2}}

Since the proof of Theorem \ref{thm2} is rather similar to that of \cite[Theorem 4]{KLSW2010},
we shall only point out the differences.

It is well known that the \emph{Chebychev functions} $X_n$, $n\geq 0$, defined by
\begin{equation}\label{eq-cheby-pol}
X_n(\theta) := \frac{\sin((n+1)\theta)}{\sin\theta}
\qquad
(\theta\in [0, \pi])
\end{equation}
form an orthonormal basis of
$L^2([0,\pi],\mu_{\rm ST})$. Hence, for any integer $\omega\geq 1$, the functions
of the type
$$
(\theta_1,\ldots,\theta_{\omega})\mapsto \prod_{1\leq j\leq \omega}
{X_{n_j}(\theta_j)}
$$
for $n_j\geq 0$, form an orthonormal basis of
$L^2([0,\pi]^{\omega},\mu_{\rm ST}^{\otimes \omega})$.

For any $f\in \mathcal{H}_{2k}^*$ and prime $p$,
the Deligne inequality \eqref{Deligne:1} implies that there is a real number $\theta_f(p)\in [0,\pi]$ such that
\begin{equation}\label{def:thetafp}
\lambda_f(p) = 2\cos \theta_f(p).
\end{equation}

\begin{lemma}\label{lem4.1}
Let $k\in \N, s\in \N$ and $z\geq 2$ be a real number. 
For any prime $p\leq z$, let
$$
Y_p(\theta) := \sum_{0\le j\le s} {\hat{y}_{p}(j)X_j(\theta)}
$$
be a ``polynomial'' of degree $\leq s$ expressed in the basis of
Chebychev functions on $[0,\pi]$. Then we have
$$
\sum_{f\in \mathcal{H}_{2k}^*} \omega_f \prod_{p\leq z} Y_p(\theta_f(p))
= \prod_{p\leq z} {\hat{y}_p(0)} + O\big(C^{\pi(z)}D^{sz} k^{-5/6}\big),
$$
where $\|f\|^2$ is the Petersson norm of $f$,
$$
\omega_f := (4\pi)^{-(k-1)} \Gamma(k-1) \|f\|^{-2},
\qquad
C := \max_{p,j}{|\hat{y}_p(j)|}, 
$$
and $D\geq 1$ and the implied constant is absolute.
\end{lemma}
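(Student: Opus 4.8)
The plan is to derive Lemma~\ref{lem4.1} from the Petersson trace formula, after rewriting the product over primes as a short linear combination of Hecke eigenvalues $\lambda_f(m)$.

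\emph{Step 1: reduction via the Chebychev--Hecke identity.} For $f\in\mathcal{H}_{2k}^*$ the Hecke relations, the Deligne normalisation \eqref{Deligne:1} and \eqref{def:thetafp} give the classical identity $\lambda_f(p^{n})=\sin((n+1)\theta_f(p))/\sin\theta_f(p)=X_n(\theta_f(p))$ for all $n\ge 0$ and all primes $p$. Writing each $Y_p$ in the Chebychev basis and multiplying out, one gets
$$
\prod_{p\le z}Y_p(\theta_f(p))=\sum_{\mathbf n}\Big(\prod_{p\le z}\hat y_p(n_p)\Big)\prod_{p\le z}\lambda_f(p^{n_p})=\sum_{\mathbf n}\Big(\prod_{p\le z}\hat y_p(n_p)\Big)\lambda_f(m_{\mathbf n}),
$$
where $\mathbf n=(n_p)_{p\le z}$ runs over tuples with $0\le n_p\le s$, $m_{\mathbf n}:=\prod_{p\le z}p^{n_p}$, and the last equality uses multiplicativity of $n\mapsto\lambda_f(n)$. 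Note $m_{\mathbf n}=1$ exactly when $\mathbf n=0$, while in general $1\le m_{\mathbf n}\le\prod_{p\le z}p^{s}\le e^{O(sz)}$ by the Chebychev estimate $\sum_{p\le z}\log p\ll z$.

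\emph{Step 2: the Petersson formula.} Next I would apply the Petersson trace formula for the Hecke basis $\mathcal{H}_{2k}^*$ of $\mathcal{H}_{2k}$ at level one: for every integer $m\ge 1$,
$$
\sum_{f\in\mathcal{H}_{2k}^*}\omega_f\,\lambda_f(m)=\delta_{m,1}+2\pi\,i^{-2k}\sum_{c\ge 1}\frac{S(m,1;c)}{c}\,J_{2k-1}\!\Big(\frac{4\pi\sqrt m}{c}\Big)=:\delta_{m,1}+\Delta(m).
$$
Summing this against $\prod_{p\le z}\hat y_p(n_p)$ over $\mathbf n$, the diagonal terms contribute only the tuple $\mathbf n=0$, producing exactly $\prod_{p\le z}\hat y_p(0)$, and the remaining contribution is $\sum_{\mathbf n}\big(\prod_{p\le z}\hat y_p(n_p)\big)\Delta(m_{\mathbf n})$, which in absolute value is at most $C^{\pi(z)}(s+1)^{\pi(z)}\max_{1\le m\le e^{O(sz)}}|\Delta(m)|$ since $|\hat y_p(j)|\le C$ and there are at most $(s+1)^{\pi(z)}$ tuples.

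\emph{Step 3: estimating $\Delta(m)$.} It remains to bound $\Delta(m)$ uniformly, for which I would combine two estimates: the trivial one $|\Delta(m)|\le 1+\sum_f\omega_f|\lambda_f(m)|\ll\tau(m)\ll_\varepsilon m^{\varepsilon}$ (using $\sum_f\omega_f\ll 1$ and \eqref{Deligne:1}), and the Weil/Bessel bound coming from $|S(m,1;c)|\le\tau(c)(m,c)^{1/2}c^{1/2}$ and $|J_{2k-1}(x)|\le (x/2)^{2k-1}/\Gamma(2k)$, which after summing the rapidly convergent $c$-series and applying Stirling gives $|\Delta(m)|\ll_\varepsilon m^{\varepsilon}(2\pi\sqrt m\,e/(2k-1))^{2k-1}$ (one may instead quote the sharper Kloosterman--Bessel estimate underlying the exponent $k^{-5/6}$, as in \cite{KLSW2010}). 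If $e^{O(sz)}\le k^{2}/c_0$ for a suitable absolute $c_0$ (i.e.\ $sz$ is small compared with $\log k$), the second bound makes $\Delta(m)$ superpolynomially small in $k$, so $\max_m|\Delta(m)|\ll e^{\varepsilon sz}k^{-5/6}$; in the opposite range $\log k\ll sz$, so $k^{-5/6}\ge e^{-c's z}$ and the trivial bound $|\Delta(m)|\ll e^{\varepsilon sz}$ already gives $\max_m|\Delta(m)|\ll e^{O(sz)}k^{-5/6}$. In both cases $\max_m|\Delta(m)|\ll E_0^{sz}k^{-5/6}$ for an absolute $E_0\ge 1$; using $\pi(z)\le z$ and $(s+1)^{\pi(z)}\le(s+1)^{z}\le 2^{sz}$, the error becomes $\ll C^{\pi(z)}D^{sz}k^{-5/6}$ with $D:=2E_0\ge 1$ absolute, which is the assertion.

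\emph{Main obstacle.} The Chebychev--Hecke identity, the product expansion, the use of Petersson, and the counting of tuples are all routine; the real point is Step~3, namely producing a \emph{uniform} bound for the Kloosterman--Bessel term $\Delta(m_{\mathbf n})$ and verifying that the size of $m_{\mathbf n}$, which can be as large as $\exp(O(sz))$, is genuinely swallowed by a factor $D^{sz}$ with $D$ independent of $k,s,z$. This is what forces the two-range argument and relies crucially on the super-exponential decay of $1/\Gamma(2k)$ (equivalently, on the smallness of $J_{2k-1}$ at arguments much below $2k$) dominating any fixed exponential in $k$.
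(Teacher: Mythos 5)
Your proposal is correct and is essentially the argument behind this lemma: the paper states it without proof (it is imported from Kowalski--Lau--Soundararajan--Wu \cite{KLSW2010}), and their proof is exactly your route --- the identity $\lambda_f(p^n)=X_n(\theta_f(p))$, expansion of the product into eigenvalues $\lambda_f(m_{\mathbf n})$ with $m_{\mathbf n}\le e^{O(sz)}$, the Petersson trace formula isolating the constant term $\prod_p\hat y_p(0)$, and a uniform Weil--Bessel bound on the off-diagonal term yielding the $k^{-5/6}$ saving with the $m$-dependence absorbed into $D^{sz}$. The only remark worth adding is that the weight here is $2k$, so the harmonic weight should be $\omega_f=\Gamma(2k-1)(4\pi)^{-(2k-1)}\|f\|^{-2}$ (the paper's displayed normalisation is a typo), which is what your application of the Petersson formula and the bound $\sum_f\omega_f\ll 1$ implicitly use.
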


Let $z\ge 2$ be a parameter to be determined later and $L\equiv 3 \, (\text{mod}\,4)$ be a positive integer.
According to \cite[Theorem 7]{BMV00} with the choice of parameters $N=\pi(z)$ (the number of primes $p\le z$) and $u_n=0$, $v_n=\frac{1}{4}$ for
all $n\leq \pi(z)$, we can get two explicit trigonometric polynomials on $[0,1]^{\pi(z)}$, denoted
$A_L(\boldsymbol{x})$, $B_L(\boldsymbol{x})$, such that
\begin{equation}\label{BMV}
A_L(\boldsymbol{\theta}/\pi)-B_L(\boldsymbol{\theta}/\pi)
\leq \prod_{p\le z} \mathbb{1}_{[0, \frac{\pi}{4}]}(\theta_p) 
\end{equation}
for all $\boldsymbol{\theta} := (\theta_p)_{p\le z}\in [0,\pi]^{\pi(z)}$,
where $\mathbb{1}_{[0, \frac{\pi}{4}]}(t)$ is the characteristic function of 
$[0, \frac{\pi}{4}]\subset [0,\pi]$ 
(since $(v_n-u_n)(L+1)=\frac{1}{4}(L+1)$ is a positive
integer, we are in the situation $\Phi_{u,v}\in \mathcal{B}_N(L)$ of loc. cit.).
Moreover, $A_L(\boldsymbol{\theta}/\pi)$ is a product of
polynomials over each variable, and $B_L(\boldsymbol{\theta}/\pi)$ is a sum of $\pi(z)$ such products.

In view of \eqref{def:thetafp} and \eqref{relation:lambdaf(n)-fraka*ft(pnu)} with $\nu=1$, we have the following implicit relations
$$
\theta_f(p)\in [0, \tfrac{1}{4}\pi]
\;\Leftrightarrow\;
\lambda_f(p)\ge \sqrt{2}
\;\Rightarrow\;
\mathfrak{a}_{\mathfrak{f}}^*(p^2)\geq 0.
$$
Combining these with \eqref{BMV}, we can write, with the notation $\boldsymbol{\theta}_f=(\theta_f(p))_{p\le z}$,
\begin{equation}\label{proof-thm2:1}
\begin{aligned}
\sum_{\stacksum{\mathfrak{f}\in \mathfrak{S}_{k+1/2}^{+, *}} {\mathfrak{a}_{\mathfrak{f}}(p^2)\geq 0 \; \text{for} \; p\le z}} 
{\omega_f }
& \geq \sum_{\stacksum{f\in \mathcal{H}_{2k}^*}{\lambda_f(p)\geq \sqrt{2} \; \text{for} \; p\le z}} {\omega_f }
\\\noalign{\vskip -0,5mm}
& \geq \sum_{f\in \mathcal{H}_{2k}^*} 
\omega_f  \prod_{p\le z} \mathbb{1}_{[0, \frac{\pi}{4}]}(\theta_f(p))
\\\noalign{\vskip 0,5mm}
& \geq \sum_{f\in \mathcal{H}_{2k}^*} \omega_f 
\big(A_L(\boldsymbol{\theta}_f/\pi)-B_L(\boldsymbol{\theta}_f/\pi)\big).
\end{aligned}
\end{equation}

The next lemma is an analogue of \cite[Lemma 3.2]{KLSW2010}.

\begin{lemma}\label{lem4.2}
With notation as above, we have:
\par
\emph{(a)} 
For any $\varepsilon\in (0,\tfrac{1}{4})$, there exist constants $c>0$ and $L_0\geq 1$ 
such that the contribution $\Delta$ of the constant
terms of the Chebychev expansions of $A_L(\boldsymbol{\theta}/\pi)$ and
$B_L(\boldsymbol{\theta}/\pi)$ satisfies
$$
\Delta \geq (\tfrac{1}{4}-\varepsilon)^{\pi(z)},
$$
if $L\equiv 3\,({\rm mod}\,4)$ is the smallest integer $\geq \max\{c\pi(z), L_0\}$.
\par
\emph{(b)} 
All the coefficients in the expansion in terms of Chebychev
functions of the factors in $A_L(\boldsymbol{\theta}/\pi)$ or in the terms of
$B_L(\boldsymbol{\theta}/\pi)$ are bounded by $1$.
\par
\emph{(c)} 
The degrees, in terms of Chebychev functions, of the
factors of $A_L(\boldsymbol{\theta}/\pi)$ and of the terms of $B_L(\boldsymbol{\theta}/\pi)$,
are $\leq 2L$.
\end{lemma}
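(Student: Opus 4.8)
The plan is to read off all three assertions directly from the explicit construction behind \cite[Theorem 7]{BMV00}, in exact analogy with \cite[Lemma 3.2]{KLSW2010}: none of the three statements involves modular forms, so what really has to be done is to follow the one-dimensional Beurling--Selberg/Vaaler extremal functions through the multidimensional box construction and through the substitution $\boldsymbol{x}=\boldsymbol{\theta}/\pi$. First I would record the shape of the objects. For each prime $p\le z$ the Vaaler machinery produces real trigonometric polynomials $m_p(x)\le\mathbb{1}_{[0,1/4]}(x)\le M_p(x)$ of degree $\le L$ in $e(x)=e^{2\pi\mathrm{i}x}$, with $\|M_p-m_p\|_{1}\ll 1/L$, and the multidimensional box minorant of \cite[Theorem 7]{BMV00} is built from these:
\[
A_L(\boldsymbol{x})=\prod_{p\le z}M_p(x_p),
\qquad
B_L(\boldsymbol{x})=\sum_{p\le z}\bigl(M_p(x_p)-m_p(x_p)\bigr)\prod_{\substack{q\le z\\ q\ne p}}M_q(x_q)
\]
(the precise form of the correction factors in $B_L$ is immaterial below), so that $A_L-B_L\le\prod_{p\le z}\mathbb{1}_{[0,1/4]}(x_p)$ pointwise. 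The arithmetic hypothesis $(v_p-u_p)(L+1)=\tfrac14(L+1)\in\Z$, i.e.\ $L\equiv3\pmod4$, is precisely what places $\Phi_{u,v}$ in the class $\mathcal{B}_N(L)$ of loc.\ cit.\ and gives clean control on all the Fourier coefficients.

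Parts (b) and (c) are then purely formal. For (c): after the substitution $x_p\mapsto\theta_p/\pi$ a monomial $e(jx_p)$ with $|j|\le L$ becomes $e^{2\mathrm{i}j\theta_p}$, of frequency $\le2L$ in $\theta_p$, and re-expanding $e^{2\mathrm{i}j\theta}$ in the $X_n$ --- via $2\cos m\theta=X_m(\theta)-X_{m-2}(\theta)$ --- uses only $X_0,\dots,X_{2L}$; hence every factor of $A_L(\boldsymbol{\theta}/\pi)$ and every term of $B_L(\boldsymbol{\theta}/\pi)$ has Chebychev degree $\le2L$. For (b): I would invoke the explicit Fourier coefficients of the Vaaler majorant and minorant in the present arithmetic case, all of modulus $\le\tfrac14+\tfrac1{L+1}<1$ (and $\ll1/|j|$ for $j\ne0$); since the passage from the $e(jx)$-expansion to the $X_n$-expansion, after the substitution, is a finite change of basis with coefficients in $\{0,\pm\tfrac12,\pm1\}$, one checks coefficientwise that no Chebychev coefficient of $M_p(\theta/\pi)$ or of $m_p(\theta/\pi)$ exceeds $1$ in modulus, and the same bound is inherited by the products occurring in $A_L$ and in the summands of $B_L$.

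For (a), the constant term (the coefficient of $X_0^{\otimes\pi(z)}\equiv1$) of a function on $[0,\pi]^{\pi(z)}$ is its integral against $\mu_{\rm ST}^{\otimes\pi(z)}$, so $\Delta=\int(A_L-B_L)(\boldsymbol{\theta}/\pi)\,\mathrm{d}\mu_{\rm ST}^{\otimes\pi(z)}$. Since $A_L$ is a product over the variables and $B_L$ a sum of $\pi(z)$ such products, this factorises:
\[
\Delta=\prod_{p\le z}\gamma_p-\sum_{p\le z}\gamma_p'\prod_{\substack{q\le z\\ q\ne p}}\gamma_q,
\qquad
\gamma_p:=\int_0^\pi M_p(\theta/\pi)\,\mathrm{d}\mu_{\rm ST},
\quad
\gamma_p':=\int_0^\pi\bigl(M_p-m_p\bigr)(\theta/\pi)\,\mathrm{d}\mu_{\rm ST}.
\]
The crux is the one-dimensional estimate. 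Because $\int_0^\pi e^{2\mathrm{i}j\theta}\,\mathrm{d}\mu_{\rm ST}$ vanishes for $|j|\ge2$, both $\gamma_p$ and $\gamma_p'$ depend only on $\widehat{M_p}(j),\widehat{m_p}(j)$ with $|j|\le1$, and a direct calculation with the explicit extremal functions (as in \cite[Lemma 3.2]{KLSW2010}) gives $\gamma_p\ge\tfrac14-\varepsilon_0(L)$ and $0\le\gamma_p'\ll1/L$ with $\varepsilon_0(L)\to0$ as $L\to\infty$; hence
\[
\Delta\ge\Bigl(\tfrac14-\varepsilon_0(L)\Bigr)^{\pi(z)}-O\Bigl(\tfrac{\pi(z)}{L}\Bigr)\Bigl(\tfrac14+\varepsilon_0(L)\Bigr)^{\pi(z)-1}.
\]
Choosing $L$ to be the smallest integer $\equiv3\pmod4$ with $L\ge\max\{c\pi(z),L_0\}$, for a suitable $c=c(\varepsilon)$ and $L_0\ge1$, makes both correction terms negligible against $\bigl(\tfrac14\bigr)^{\pi(z)}$, so that $\Delta\ge(\tfrac14-\varepsilon)^{\pi(z)}$, which is (a).

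The step I expect to be the real obstacle is this last quantitative balance: the per-variable errors are only $O(1/L)$ while there are $\pi(z)$ of them, which forces $L$ up to size $\gg\pi(z)$; this in turn enters the error term $C^{\pi(z)}D^{sz}k^{-5/6}$ of Lemma \ref{lem4.1}, in which $s\le2L\ll\pi(z)\ll z$, and it is the requirement that this error remain below $\Delta$ that limits $z$ to size about $\sqrt{(\log k)\log_2k}$ --- exactly what produces the shape of the bound in Theorem \ref{thm2}. Everything else is bookkeeping: propagating the explicit BMV/Vaaler coefficients through the substitution $x=\theta/\pi$ and through the exponential-to-Chebychev change of basis.
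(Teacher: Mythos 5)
You are right that the paper itself offers no proof of this lemma (it simply points to \cite[Lemma 3.2]{KLSW2010}), so reconstructing the argument from the BMV/Selberg--Vaaler construction, factorising the constant term $\Delta=\prod_p\gamma_p-\sum_p\gamma_p'\prod_{q\ne p}\gamma_q$ and choosing $L\gg\pi(z)$ is indeed the intended route. But your execution has a genuine quantitative gap in (a): the constant term in the \emph{Chebychev} expansion is, as you say, the integral against $\mu_{\rm ST}$, and since the majorant $M_p$ converges to the indicator of $[0,1/4]$ in $L^1(dx)$ while $d\mu_{\rm ST}=2\sin^2(\pi x)\,dx\le 2\,dx$, one gets $\gamma_p=\mu_{\rm ST}([0,\pi/4])+O(1/L)=\tfrac14-\tfrac{1}{2\pi}+O(1/L)\approx 0.091$, \emph{not} $\tfrac14-\varepsilon_0(L)$ with $\varepsilon_0(L)\to0$. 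Your own reduction to $|j|\le1$ makes this explicit: $\int_0^\pi e^{\pm 2i\theta}\,d\mu_{\rm ST}=-\tfrac12$, so $\gamma_p=\widehat{M}_p(0)-\Re\widehat{M}_p(1)\approx\tfrac14-\tfrac1{2\pi}$; you have conflated the Lebesgue normalisation $\widehat{M}_p(0)=\tfrac14+\tfrac1{L+1}$ with the Sato--Tate constant term. What your argument actually yields is $\Delta\ge(\tfrac14-\tfrac1{2\pi}-\varepsilon)^{\pi(z)}$, which is perfectly sufficient for Theorem \ref{thm2} (any fixed positive base to the power $\pi(z)$ gives the required $\exp(-c\sqrt{(\log k)/\log_2k})$), but it does \emph{not} give the lemma for every $\varepsilon\in(0,\tfrac14)$; since $\gamma_p'\ge0$ forces $\Delta\le\prod_p\gamma_p$, the stated bound is in fact unreachable (indeed false) for $\varepsilon<\tfrac1{2\pi}$ with this construction --- the constant $\tfrac14$ is inherited from the KLSW setting rather than recomputed for the interval $[0,\pi/4]$.

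There is a second gap, in (b)--(c), precisely where you call the passage from the $e(jx)$-basis to the $X_n$-basis ``a finite change of basis with coefficients in $\{0,\pm\tfrac12,\pm1\}$''. The interval $[0,1/4]$ is not symmetric about $0$, so the Vaaler/BMV majorant and minorant are not even; after $x=\theta/\pi$ they contain $\sin(2j\theta)$ terms, and $\sin(2j\theta)=\sin\theta\cdot X_{2j-1}(\theta)$ is not a polynomial in $\cos\theta$, hence has \emph{no} expansion in the $X_n$ at all. As written, $A_L(\boldsymbol{\theta}/\pi)$ and $B_L(\boldsymbol{\theta}/\pi)$ therefore do not possess Chebychev expansions, and Lemma \ref{lem4.1}, which requires each $Y_p$ to be a polynomial in the $X_j$, cannot be applied to them. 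The standard repair is to force evenness by working with $x=\theta/(2\pi)\in[0,1/2]$ and applying \cite[Theorem 7]{BMV00} to the symmetric box with $u_n=-\tfrac18$, $v_n=\tfrac18$ (side still $\tfrac14$, so the condition $L\equiv3\ ({\rm mod}\ 4)$ is unchanged, and the mirror interval $[\tfrac78,1]$ lies outside the range of $x$): the extremal functions are then cosine polynomials, $\cos(j\theta)=\tfrac12(X_j-X_{j-2})$ yields genuine Chebychev expansions of degree $\le 2L$ with coefficients bounded by $1$, and the $\mu_{\rm ST}$ constant-term computation above goes through. Note that symmetrising in $x=\theta/\pi$ instead cannot work: an even $1$-periodic minorant of the indicator of $[0,1/4]$ on $[0,1]$ satisfies $m(x)=m(1-x)\le0$ on $[0,1/4]$, so its main term is useless. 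Your overall architecture is the right one, but both the per-variable constant and the legitimacy of the Chebychev expansion need the corrections above.
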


Take $L$ as in Lemma \ref{lem4.2}(a) (we can obviously assume $L\geq L_0$, since otherwise $z$ is bounded).
Since $A_L(\boldsymbol{\theta}/\pi)$ is a product of polynomials over each variable
and $B_L(\boldsymbol{\theta}/\pi)$ is a sum of $\omega$ such products, 
we can now apply Lemma \ref{lem4.1} to the terms on the right-hand side of \eqref{proof-thm2:1}.
Noticing that Lemma \ref{lem4.2}(b) implies $C\le 1$, we have
\begin{equation}\label{proof-thm2:2}
\begin{aligned}
\sum_{\substack{\mathfrak{f}\in \mathfrak{S}_{k+1/2}^{+, *}\\ \mathfrak{a}_{\mathfrak{f}}(p^2)\geq 0 \; \text{for} \; p\le z}} \omega_f 
& \geq \sum_{f\in \mathcal{H}_{2k}^*} \omega_f 
\big(A_L(\boldsymbol{\theta}_f/\pi)-B_L(\boldsymbol{\theta}_f/\pi)\big)
\\\noalign{\vskip -2,5mm}
& = \Delta + O(D^{z\pi(z)} k^{-5/6}).
\end{aligned}
\end{equation}

Fixing $\varepsilon\in (0, \tfrac{1}{8})$, taking $z = c_1\sqrt{(\log k)\log_2k}$ and using Lemma \ref{lem4.2}, we have
\begin{align*}
\Delta + O(D^{z\pi(z)} k^{-5/6})
& \ge (\tfrac{1}{4}-\varepsilon)^{\pi(z)} + O(D^{z\pi(z)} k^{-5/6})
\\
& \gg \exp\big(-(c_2/2)\sqrt{\log k)/\log_2k}\big).
\end{align*}
Combining it with \eqref{proof-thm2:2} and noticing that $\mathfrak{a}_{\mathfrak{f}}(p^2)\geq 0 \; \text{for} \; p\le z$
implies $n_{\mathfrak{f}}^*>z$, we find that
$$
\sum_{\substack{\mathfrak{f}\in \mathfrak{S}_{k+1/2}^{+, *}\\ n_{\mathfrak{f}}^*>c_1\sqrt{(\log k)\log_2k}}} \omega_f 
\gg \exp\big(-(c_2/2)\sqrt{\log k)/\log_2k}\big).
$$
Now the required result follows from this inequality thanks to the well-known bounds $\omega_f\ll (\log k)/k$.

\vskip 6mm

\noindent{\bf Acknowledgement}.
We began working on this paper during a visit of the first author at Universit\'e de Lorraine
during the academic year 2018-19.
He would like to thank the institute for the pleasant working conditions. 
This work is supported in part by the National Natural Science Foundation of China (Grant Nos. 11771121, 11971370 and 11871175).

\vskip 10mm

\end{document}